%
\documentclass[runningheads]{llncs}
\usepackage{amssymb}
\setcounter{tocdepth}{3}
\usepackage{graphicx}
\usepackage{url}
\usepackage{amsmath, mathrsfs, multirow, subfigure}
\usepackage{ifthen} 
\usepackage{amsfonts}
\usepackage[usenames]{color}
\usepackage[normalem]{ulem}
\usepackage{tcolorbox}
\usepackage{soul}
\usepackage[ruled,vlined]{algorithm2e}
\usepackage{float}

\newcommand{\keywords}[1]{\par\addvspace\baselineskip
\noindent\keywordname\enspace\ignorespaces#1}

\usepackage[sort&compress]{natbib}
\bibpunct{(}{)}{;}{a}{}{,} 
\makeatletter
\renewcommand\bibsection%
{
  \section*{Bibliography}
  \section*{\refname
    \@mkboth{\MakeUppercase{\refname}}{\MakeUppercase{\refname}}}
}
\makeatother

\newcommand{\prob}{\mathsf{P}}

\newcommand{\risk}{\mathsf{R}}

\newcommand{\rank}{\text{rank}}

\def\cR{\mathcal{R}}
\def\cY{\mathcal{Y}}
\def\rank{\mathrm{rank}}

\DeclareMathOperator*{\argmin}{arg\,min}

\newcommand{\cred}{\mathscr{C}}

\newcommand{\umu}{\overline{\mu}}

\begin{document}
\title{Decision theory via model-free generalized fiducial inference}
\titlerunning{Decision theory via MFGF inference}
\author{Jonathan P Williams\inst{1}\and
Yang Liu\inst{2}}
\authorrunning{J~P~Williams and Y~Liu}
\institute{Department of Statistics \\ North Carolina State University, Raleigh, North Carolina 27695, US \\
\email{jwilli27@ncsu.edu} \and
Department of Human Development and Quantitative Methodology \\ University of Maryland, College Park, Maryland 20742, US \\
\email{yliu87@umd.edu}
}
\maketitle              
\begin{abstract}
Building on the recent development of the model-free generalized fiducial (MFGF) paradigm \citep{williams2023} for predictive inference with finite-sample frequentist validity guarantees, in this paper, we develop an MFGF-based approach to decision theory.  Beyond the utility of the new tools we contribute to the field of decision theory, our work establishes a formal connection between decision theories from the perspectives of fiducial inference, conformal prediction, and imprecise probability theory.  In our paper, we establish pointwise and uniform consistency of an {\em MFGF upper risk function} as an approximation to the true risk function via the derivation of nonasymptotic concentration bounds, and our work serves as the foundation for future investigations of the properties of the MFGF upper risk from the perspective of new decision-theoretic, finite-sample validity criterion, as in \cite{martin2021}.
\keywords{Choquet integral; empirical risk; possibility theory; upper prevision; upper probability.}
\end{abstract}

\section{Introduction}\label{s.intro}

Decision theory is an important topic in statistical inference, where the goal is to determine an optimal decision rule based on observed data.  Such problems are typically mathematically specified via minimizing an expected value of a loss function.  The choice of loss function is application and context specific, and is considered to be given by the practitioner.  Our goal in this article is to develop a decision theory based on imprecise probabilistic inference inherited from the model-free generalized fiducial (MFGF) inference paradigm.  

The MFGF paradigm was introduced in \cite{williams2023} as a mechanism for building model-free predictive distributions, based on imprecise probability theory, that both, serve as general purpose inferential tools and yield prediction sets with finite-sample, frequentist coverage guarantees.  The MFGF paradigm establishes a formal connection between three important disciplines focused on the development of uncertainty quantification; namely, fiducial inference \citep{fisher1935,hannig2016}, conformal prediction \citep{vovk2022}, and imprecise probability theory \citep{walley1991,augustin2014}.  Thus, the contribution of our paper is the development of a new approach to decision theory, that also establishes a formal connection among the three mentioned disciples on the topic of decision theory.  Fiducial inference for decision theory has previously been considered in \cite{taraldsen2024}, and references therein, based on [precise] probability calculus; in the context of conformal prediction, a few papers now exist under the umbrella of ``conformal risk control'' \citep{angelopoulos2023}; and imprecise probabilistic developments in decision theory are surveyed in \cite{huntley2014} and \cite{denoeux2019}.  Additionally relevant to our work are the decision-theoretic developments in the inferential models context in \cite{martin2021}.

\section{Background on MFGF}

Assume the variables $Y_{1},\dots,Y_{n+1} \overset{\text{iid}}{\sim} Y$ are continuous and $\cY$-valued, and let $\Psi : \cY^{n}\times\cY \to \cR$ be a nonconformity measure (i.e., a measurable function that is invariant to the order of its first $n$ components) of interest.  In this case, the random variables $t_{i}(Y_{i}) := \Psi(Y^{n+1}_{-i},Y_{i})$, for $i \in \{1,\dots,n+1\}$, are exchangeable, in which $Y_{-i}^{n+1} := (Y_1,\dots,Y_{i-1},Y_{i+1},\dots,Y_{n+1})$, and so the position or {\em rank} of $t_{i}(Y_{i})$ in the order statistics (in ascending order) of the sample $t_{1}(Y_{1}),\dots,t_{n+1}(Y_{n+1})$ follows a discrete uniform distribution over $\{1,\dots, n+1\}$.  

From a generalized fiducial (GF) inference perspective this rank statistic represents a pivot that can be used to quantify uncertainty in $Y_{n+1}$ via a data generating {\em association}:
\begin{equation}\label{model_free_dge}
\rank\{t_{n+1}(Y_{n+1})\} = V \sim \text{uniform}\{1,\dots, n+1\}.
\end{equation}
This data generating {\em association} is agnostic to the choice of model for $Y$, and is investigated in \cite{williams2023} in the context of MFGF predictive inference for $Y_{n+1}$ given a sample $Y_{1},\dots,Y_{n}$.  The GF inference algorithm as in \cite{hannig2016} is then applied by first replacing the unobserved true auxiliary variable in (\ref{model_free_dge}) with an independent copy, $V^{\star} \sim \text{uniform}\{1,\dots, n+1\}$.  

Next, the GF {\em switching principle} is applied to obtain an imprecise GF distribution of $Y_{n+1}$ as a distribution over the random {\em focal sets},
\begin{equation}\label{imprecise_gf}
A_{n}(V^{\star}) := \argmin_{y\in\cY}\big\{|\rank[t_{n+1}(y)] - V^{\star}|\big\} = \big\{ y \, : \, \rank[t_{n+1}(y)] = V^{\star} \big\},
\end{equation}
where with respect to the imprecise GF probability measure denoted by $\mu : 2^\cY \to [0,1]$, for each $k \in \{1,\dots,n+1\}$,
\begin{equation}\label{eq:focal_pt_prob}
\mu\{A_{n}(V^{\star}) = A_{n}(k)\} = \frac{1}{n+1},
\end{equation}
by virtue of the discrete uniform probability mass associated with the auxiliary variable $V^{\star}$. 

As developed in \cite{williams2023}, predictive inference for $Y_{n+1}$ can be carried out in an imprecise fashion via the construction of lower/upper probability measures (such as belief/plausibility functions) or in a more typical [precise] fashion via a choice of some {\em compatible} probability measure, i.e., contained in the {\em credal set} of $\mu$,
$
\cred(\mu) := \big\{\text{probability measures } \Delta \, : \, \Delta(B) \le \umu(B), \text{ for any measurable set } B \big\}.
$

As developed in \cite{williams2023}, prediction sets that achieve at least $1-\alpha$ frequentist coverage, for any finite sample size $n$, can be constructed on $\cY$, for any level $1-\alpha \in [0,1]$, as the union $\Omega_{n}(k) := \cup_{1\le v \le k}A_{n}(v)$, where $k$ is the smallest integer such that $k \ge (1-\alpha)(n+1)$:
\[
\prob\big\{ Y_{n+1} \in \Omega_{n}(k)\big\} = \prob\big[\rank\{t_{n+1}(Y_{n+1})\} \le k\big] = \frac{k}{n+1} \ge 1-\alpha,
\]
where the second equality follows by the exchangeability of $t_{1}(Y_{1}),\dots,t_{n+1}(Y_{n+1})$.  Alternatively, these prediction sets can be defined as the upper-level sets of the contour function given in Definition \ref{def:gf_transducer}:
\begin{definition}\label{def:gf_transducer}
A GF transducer function is a contour function $f_{n} : \cY \to \cR^{+}$ defined by $f_{n}(y) := \mu\{\Omega_{n}(V^{\star})\ni y\}$.  See \cite{williams2023} for more details.
\end{definition}
See Figure~\ref{fig:loss} for graphical illustrations of two GF transducers based on a set of $n = 4$ hypothetical observed data points.

Lastly, an important property of GF probabilities is that they are frequentist probabilities on the focal sets, i.e., for every $k \in \{1,\dots,n+1\}$,
\[
\prob\big\{ Y_{n+1} \in A_{n}(k)\big\} = \frac{1}{n+1} = \mu\big\{ A_{n}(V^{\star}) = A_{n}(k)\big\},
\]
and by Lemma 8 in \cite{williams2023} $\Delta \in \cred(\mu)$ if and only if $\forall k \in \{1,\dots,n+1\}$,
\begin{equation}\label{eq:precise_prob}
\Delta\{A_{n}(k)\} = \frac{1}{n+1} = \mu\{A_{n}(k)\} = \prob\{A_{n}(k)\}.
\end{equation}

\section{Decision theory with imprecision}

Given a loss function $\ell : \Theta\times\cY \to \cR^{+}$ and a data generating model $Y \sim \prob$, traditional decision theory \citep[e.g.,][]{shalev2014,dey2023} begins by defining the true risk function 
\begin{equation}\label{eq:true_risk}
\risk(\vartheta) := \int \ell(\vartheta,y) \, d\prob(y),
\end{equation}
which is approximated by the empirical risk, $\risk_{n}(\vartheta) := n^{-1}\sum_{i=1}^{n}\ell(\vartheta,y_{i})$.  Alternatively, from the above MFGF developments, we could approximate the true risk by 
$
\risk_{\Delta}(\vartheta) := \int \ell(\vartheta,y) \, d\Delta(y),
$
for any $\Delta \in \cred(\mu)$.  Any probability measure $\Delta \in \cred(\mu)$, i.e., satisfying (\ref{eq:precise_prob}), can be regarded as a precise-probabilistic approximation to the imprecise MFGF distribution, and is guaranteed to produce prediction sets that achieve at least $1-\alpha$ frequentist coverage, for any finite sample size $n$, at any level $1-\alpha \in [0,1]$. 

Every probability measure $\Delta \in \cred(\mu)$ is, however, similarly compatible with the observed data, and so relying on the risk associated with any single $\Delta \in \cred(\mu)$ is not justified without further assumptions on the data generating mechanism.  Moreover, the false confidence theorem \citep{carmichael2018,martin2019} makes it clear that statistical inference based on any probability measure $\Delta$ is provably unreliable.  Alternatively, as advocated in \cite{martin2021}, we can define an imprecise analogue of the risk via an upper prevision/expectation as
\begin{equation}\label{eq:upper_prev_general}
\risk_{\mu}(\vartheta) := \sup\{\risk_{\Delta}(\vartheta) \, : \, \Delta \in \cred(\mu)\}.
\end{equation}
This construction makes it possible to consider a measure of risk that reflects all probability measures in the credal set $\cred(\mu)$, and as discussed in \cite{martin2021}, this imprecision plays a fundamental role in the construction of decision-theoretic, finite-sample validity criterion.  In what follows here, however, we investigate consistency properties of this MFGF upper risk in simple settings to facilitate our intuitions and to motivate a future, extended version of this manuscript where we endeavor to establish decision-theoretic, finite-sample validity properties.

\subsection{MFGF upper risk function}

Most importantly, for the MFGF upper risk to be reliable, it must approximate in some sense the true risk function as defined in equation \eqref{eq:true_risk}, i.e., an expectation with respect to the true but unknown distribution of the data, $\prob$.  A standard notion for such an approximation is via the construction of a nonasymptotic concentration bound to precisely determine a probabilistic upper bound on the rate at which the MFGF upper risk will concentrate in a neighborhood of the true risk, if at all.  As we will argue in establishing Theorems \ref{thm:consistency_pointwise} and \ref{thm:consistency_uniform}, the MFGF upper risk, is up to an error on the order of $1/n$, equivalent to the empirical risk for bounded data.  Based on this mathematical reasoning, it can be established that the MFGF upper risk will approximate the true risk in settings where the empirical risk approximates the true risk, e.g., when the uniform convergence property holds, as in Definition \ref{def:ucp}.

As it turns out, the MFGF upper risk in \eqref{eq:upper_prev_general} reduces to
\begin{equation}\label{eq:upper_prev}
\risk_{\mu}(\vartheta) = \frac{1}{n+1}\sum_{v=1}^{n+1}\sup_{y\in A_{n}(v)}\{\ell(\vartheta,y)\}.
\end{equation}
To facilitate an explicit expression for \eqref{eq:upper_prev} we take the nonconformity score as the identify function, i.e., $t_{i}(Y_{i}) = Y_{i}$, so that $A_{n}(v) = ( Y_{(v-1)}, Y_{(v)})$ for $v \in \{1,\dots,n+1\}$, where $Y_{(0)} := a$ and $Y_{(n+1)} := b$.  Recalling that $\mu\{A_{n}(v)\} = \frac{1}{n+1}$, this special setup of the MFGF distribution is equivalent to the {\em Dempster-Hill procedure} \citep{hill1968,coolen1998,vovk2024} for bounded data.  Finally, we assume that $\ell( \vartheta, \cdot) : [a,b] \to \cR^{+}$ is a convex loss function for every $\vartheta\in\Theta$, meaning that it is also convex on $A_{n}(v)$ for every $v \in \{1,\dots,n+1\}$.  See the left panel of Figure~\ref{fig:loss} for a simple graphical illustration of this setup based on $n = 4$ hypothetical observed data points. 

\begin{figure}[!t]
  \centering
  \includegraphics[width=\textwidth]{./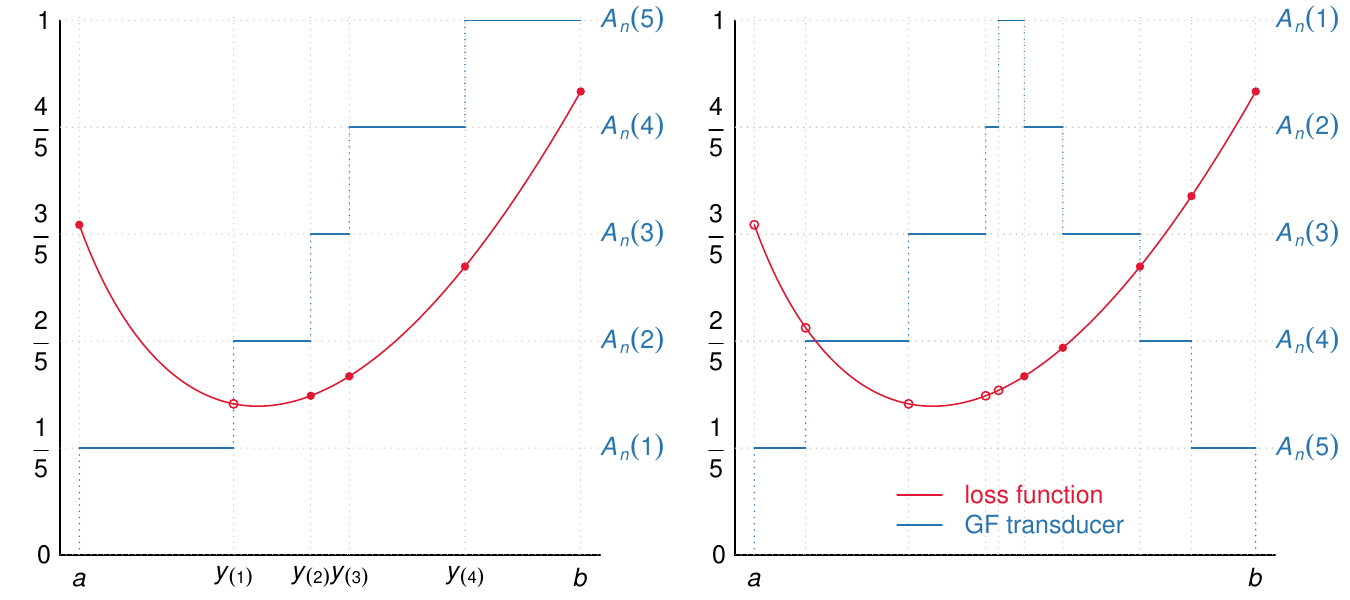}
  \caption{\footnotesize The left panel corresponds to $\Psi_i(Y_{-i}^{n + 1}, Y_i) = Y_i$ (i.e., a Dempster-Hill procedure), and the right panel corresponds to $\Psi_i(Y_{-i}^{n + 1}, Y_i) = |Y_i - \mathrm{mean}(Y_{-i}^{n + 1})|$.  The suprema of the loss function in equation \eqref{eq:upper_prev} must occur at boundaries of the focal sets, leading to the expression given in \eqref{eq:upper_prev_intermediate}; we highlight those suprema as filled dots, whose average amounts to the MFGF upper risk.}
  \label{fig:loss}
\end{figure}

In this case, 
\begin{equation}\label{eq:upper_prev_intermediate}
\sup_{y\in A_{n}(v)}\{\ell(\vartheta,y)\} = \max\{\ell(\vartheta,Y_{(v-1)}), \ell(\vartheta,Y_{(v)})\},
\end{equation}
in such a manner that 
\begin{align}\label{eq:upper_prev_simplified}
\risk_{\mu}(\vartheta) 
& = \frac{1}{n+1}\Bigg[\bigg\{\sum_{v=0}^{n+1}\ell(\vartheta,Y_{v})\bigg\} - \min_{0\le j\le n+1}\{\ell(\vartheta,Y_{j})\}\Bigg] \nonumber \\
& = \frac{1}{n+1}\bigg[n\risk_{n}(\vartheta) + \underbrace{\ell(\vartheta,a) + \ell(\vartheta,b) - \min_{0\le j\le n+1}\{\ell(\vartheta,Y_{j})\}}_{\displaystyle =: M(\vartheta)}\bigg].
\end{align}


\begin{theorem}[pointwise consistency of the MFGF upper risk]\label{thm:consistency_pointwise}
Assuming $\ell( \vartheta, \cdot) : [a,b] \to \cR^{+}$ is a convex loss function for every $\vartheta \in \Theta$, for every $\epsilon > 0$ and for all $n \ge 3M/\epsilon - 1$,
\[
\prob\big\{|\risk_{\mu}(\vartheta) - \risk(\vartheta)| > \epsilon\big\} \le 2e^{-\frac{2}{9}n\epsilon^2/L^2(\vartheta)},
\]
where $\displaystyle M := \sup_{\vartheta\in\Theta}\ell(\vartheta,a) + \sup_{\vartheta\in\Theta}\ell(\vartheta,b)$ and $\displaystyle L(\vartheta) := \sup_{y\in[a,b]}\ell(\vartheta,y) - \inf_{y\in[a,b]}\ell(\vartheta,y)$.
\end{theorem}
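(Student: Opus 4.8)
The plan is to exploit the closed form \eqref{eq:upper_prev_simplified}, which already reduces the MFGF upper risk to the empirical risk plus an $O(1/n)$ correction, and to split the deviation $\risk_{\mu}(\vartheta) - \risk(\vartheta)$ into a deterministic bias and a stochastic empirical-risk fluctuation. Starting from $\risk_{\mu}(\vartheta) = (n+1)^{-1}\{n\risk_{n}(\vartheta) + M(\vartheta)\}$ and subtracting $\risk(\vartheta)$, a short rearrangement gives
\begin{equation*}
\risk_{\mu}(\vartheta) - \risk(\vartheta) = \big(\risk_{n}(\vartheta) - \risk(\vartheta)\big) + \frac{M(\vartheta) - \risk_{n}(\vartheta)}{n+1},
\end{equation*}
so the argument separates into bounding the deterministic bias $(n+1)^{-1}\{M(\vartheta) - \risk_{n}(\vartheta)\}$ and the centered empirical risk $\risk_{n}(\vartheta) - \risk(\vartheta)$.

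First I would control the bias. Since $\ell \ge 0$, the minimum appearing in $M(\vartheta)$ is nonnegative, so $0 \le M(\vartheta) \le \ell(\vartheta,a) + \ell(\vartheta,b) \le M$. For the empirical risk, convexity of $\ell(\vartheta,\cdot)$ on $[a,b]$ forces its supremum to an endpoint, $\sup_{y\in[a,b]}\ell(\vartheta,y) = \max\{\ell(\vartheta,a),\ell(\vartheta,b)\} \le M$, whence $0 \le \risk_{n}(\vartheta) \le M$ as well. Both quantities therefore lie in $[0,M]$, giving $|M(\vartheta) - \risk_{n}(\vartheta)| \le M$ and hence $|\risk_{\mu}(\vartheta) - \risk(\vartheta)| \le |\risk_{n}(\vartheta) - \risk(\vartheta)| + M/(n+1)$. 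The stated threshold $n \ge 3M/\epsilon - 1$ is precisely the condition $M/(n+1) \le \epsilon/3$, so under it the bias contributes at most $\epsilon/3$. This is the only place the sample-size condition and the convexity hypothesis enter, and pinning down these boundedness constants is the main thing requiring care; since the closed form \eqref{eq:upper_prev_simplified}--\eqref{eq:upper_prev_intermediate} is handed to us, no deeper obstacle arises.

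Finally I would pass to Hoeffding's inequality. On the event $\{|\risk_{\mu}(\vartheta) - \risk(\vartheta)| > \epsilon\}$ the bias bound forces $|\risk_{n}(\vartheta) - \risk(\vartheta)| > 2\epsilon/3 \ge \epsilon/3$, so this event is contained in $\{|\risk_{n}(\vartheta) - \risk(\vartheta)| > \epsilon/3\}$. The summands $\ell(\vartheta,Y_{1}),\dots,\ell(\vartheta,Y_{n})$ are i.i.d.\ with mean $\risk(\vartheta)$ and take values in an interval of length $L(\vartheta)$, so the two-sided Hoeffding bound yields
\begin{equation*}
\prob\big\{|\risk_{n}(\vartheta) - \risk(\vartheta)| > \epsilon/3\big\} \le 2\exp\!\big(-2n(\epsilon/3)^2/L^2(\vartheta)\big) = 2e^{-\frac{2}{9}n\epsilon^2/L^2(\vartheta)},
\end{equation*}
which is the claimed inequality. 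Retaining the sharper threshold $2\epsilon/3$ in the Hoeffding step would in fact give the better constant $8/9$, but the conservative $2/9$ already suffices. Because $M(\vartheta)$ was bounded deterministically in $[0,M]$ despite its dependence on the realized order statistics, every estimate above holds uniformly over the data, so no additional complication is introduced by the randomness in the focal-set endpoints.
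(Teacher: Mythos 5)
Your proof is correct and follows essentially the same route as the paper's: reduce $\risk_{\mu}(\vartheta)$ to the closed form \eqref{eq:upper_prev_simplified}, bound the $M(\vartheta)/(n+1)$ correction deterministically by $M/(n+1)\le\epsilon/3$ using nonnegativity of $\ell$ and convexity (endpoint suprema), and apply Hoeffding's inequality to $\risk_{n}(\vartheta)-\risk(\vartheta)$ at level $\epsilon/3$. The only difference is bookkeeping --- you fold the paper's first two triangle-inequality terms into a single bias term $\{M(\vartheta)-\risk_{n}(\vartheta)\}/(n+1)$, which, as you note, would even permit the sharper constant $8/9$ in the exponent.
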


\begin{proof}
By the triangle inequality,
\[
|\risk_{\mu}(\vartheta) - \risk(\vartheta)| \le |\risk_{\mu}(\vartheta) - n\risk_{n}(\vartheta)/(n+1)| + \risk_{n}(\vartheta)/(n+1) + |\risk_{n}(\vartheta) - \risk(\vartheta)|,
\]
and we construct probabilistic bounds separately for each of the three terms on the right side of the inequality.

First observe that $M(\vartheta)$, as defined in \eqref{eq:upper_prev_simplified}, is nonnegative:
\begin{align*}
\min_{0\le j\le n+1}\{\ell(\vartheta,Y_{j})\} & = \min\{\ell(\vartheta,a), \ell(\vartheta,Y_{1}), \dots, \ell(\vartheta,Y_{n}), \ell(\vartheta,b)\} \\
& \le \min\{\ell(\vartheta,a), \ell(\vartheta,b)\},
\end{align*}
and so
\begin{equation}\label{eq:M_bound}
\begin{split}
M(\vartheta) & = \max\{\ell( \vartheta, a), \ell( \vartheta, b)\} + \min\{\ell( \vartheta, a), \ell( \vartheta, b)\} - \min_{0\le j\le n+1}\{\ell(\vartheta,Y_{j})\} \\
& \ge \max\{\ell( \vartheta, a), \ell( \vartheta, b)\}. 
\end{split}
\end{equation}
Then 
\[
0 \le \risk_{\mu}(\vartheta) - \frac{n\risk_{n}(\vartheta)}{n+1} = \frac{M(\vartheta)}{n+1} \le \frac{\ell(\vartheta,a) + \ell(\vartheta,b)}{n+1} \le \frac{M}{n+1}.
\]
Thus, by equation \eqref{eq:upper_prev_simplified}, 
\begin{equation}\label{eq:term_1}
\prob\big\{|\risk_{\mu}(\vartheta) - n\risk_{n}(\vartheta)/(n+1)| > \epsilon/3\big\} \le 1\big\{M/(n+1) > \epsilon/3\big\}.
\end{equation}
Next, by the convexity of $\ell( \vartheta, \cdot)$ over the closed and bounded interval $[a,b]$,
\begin{equation}\label{eq:term_2}
\begin{split}
\prob\big\{\risk_{n}(\vartheta)/(n+1) > \epsilon/3\big\} & \le 1\big[\max\{\ell( \vartheta, a), \ell( \vartheta, b)\}/(n+1) > \epsilon/3\big] \\
& \le 1\big\{M/(n+1) > \epsilon/3\big\}.
\end{split}
\end{equation}
where the second inequality is due to equation \eqref{eq:M_bound}.  Finally, considering the bounds $\inf_{y\in[a,b]}\ell(\vartheta,y) \le \ell(\vartheta,Y_{i}) \le \sup_{y\in[a,b]}\ell(\vartheta,y)$, from Hoeffding's inequality it follows that
\begin{equation}\label{eq:term_3}
\prob\big\{|\risk_{n}(\vartheta) - \risk(\vartheta)| > \epsilon/3\big\} \le 2e^{-\frac{2}{9}n\epsilon^2/L^2(\vartheta)}.
\end{equation}

The result of theorem is established by combining bounds \eqref{eq:term_1}, \eqref{eq:term_2}, and \eqref{eq:term_3} via the triangle inequality given at the beginning of the proof. $\hfill\square$
\end{proof}

\begin{definition}[uniform convergence property]\label{def:ucp}
A data generating distribution $\prob$ along with a loss function $\ell : \Theta\times\cY \to \cR^{+}$ is said to have the uniform convergence property if there exists a function $g : \cR^{+}\times\cR^{+} \to \cR$ such that for any $\epsilon > 0$ and $\alpha > 0$, if $n \ge g(\epsilon,\alpha)$, then 
\[
\prob\Big\{\sup_{\vartheta}|\risk_{n}(\vartheta) - \risk(\vartheta)| > \epsilon\Big\} < \alpha.
\]
The function $g$ is referred to as the witness function.  See, e.g., \cite{dey2023} for more details.
\end{definition}

\begin{theorem}[uniform consistency of the MFGF upper risk]\label{thm:consistency_uniform}
Assume that $\ell( \vartheta, \cdot) : [a,b] \to \cR^{+}$ is a convex loss function for every $\vartheta \in \Theta$, and that the uniform convergenve property holds, as in Definition \ref{def:ucp}.  Then for every $\epsilon > 0$, for every $\alpha > 0$, and for all $n \ge \max\{ g(\epsilon,\alpha), 3M/\epsilon - 1\}$,
\[
\prob\Big\{\sup_{\vartheta}|\risk_{\mu}(\vartheta) - \risk(\vartheta)| > \epsilon\Big\} < \alpha.
\]
\end{theorem}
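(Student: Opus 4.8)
The plan is to follow the architecture of the proof of Theorem~\ref{thm:consistency_pointwise}, but to push the supremum over $\vartheta$ inside at the outset so that each of the three terms can be controlled uniformly. Concretely, for every $\vartheta$ the same triangle inequality gives
\[
|\risk_{\mu}(\vartheta) - \risk(\vartheta)| \le |\risk_{\mu}(\vartheta) - n\risk_{n}(\vartheta)/(n+1)| + \risk_{n}(\vartheta)/(n+1) + |\risk_{n}(\vartheta) - \risk(\vartheta)|,
\]
and since the supremum of a sum is at most the sum of the suprema, I would bound $\sup_{\vartheta}|\risk_{\mu}(\vartheta) - \risk(\vartheta)|$ by the sum of the suprema of the three terms. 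The crucial observation is that the first two terms are purely deterministic and admit $\vartheta$-free bounds, so the supremum interacts with the randomness only through the third term, which is precisely the quantity the uniform convergence property is designed to control.

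For the first term I would use the identity $\risk_{\mu}(\vartheta) - n\risk_{n}(\vartheta)/(n+1) = M(\vartheta)/(n+1)$ from \eqref{eq:upper_prev_simplified} together with the chain $0 \le M(\vartheta) \le \ell(\vartheta,a) + \ell(\vartheta,b)$, so that $\sup_{\vartheta} M(\vartheta)/(n+1) \le M/(n+1)$. For the second term, convexity of $\ell(\vartheta,\cdot)$ on $[a,b]$ forces each $\ell(\vartheta,Y_{i}) \le \max\{\ell(\vartheta,a),\ell(\vartheta,b)\}$ (writing $Y_i$ as a convex combination of the endpoints), hence $\risk_{n}(\vartheta) \le \max\{\ell(\vartheta,a),\ell(\vartheta,b)\} \le M$ and therefore $\sup_{\vartheta}\risk_{n}(\vartheta)/(n+1) \le M/(n+1)$. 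Both suprema are thus deterministically at most $M/(n+1)$, which is at most $\epsilon/3$ as soon as $n \ge 3M/\epsilon - 1$; the key point is that neither bound introduces a $\vartheta$-dependent stochastic term.

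It remains to handle the third, stochastic term. Since the first two suprema are deterministically at most $\epsilon/3$ each once $n \ge 3M/\epsilon-1$, the event $\{\sup_{\vartheta}|\risk_{\mu}(\vartheta)-\risk(\vartheta)| > \epsilon\}$ is contained in the event $\{\sup_{\vartheta}|\risk_{n}(\vartheta)-\risk(\vartheta)| > \epsilon/3\}$, and I would invoke the uniform convergence property of Definition~\ref{def:ucp} to bound the probability of the latter by $\alpha$ once $n$ exceeds the corresponding witness-function threshold; taking the maximum of that threshold and $3M/\epsilon - 1$ yields the stated sample-size requirement. I do not anticipate a genuine obstacle here: the content is entirely in the deterministic $O(1/n)$ control of the two extra terms, which reduces the uniform statement to the empirical-risk uniform convergence already assumed. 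The only point requiring care is the bookkeeping of the $1/3$ constants, in particular applying the witness function at the scaled level $\epsilon/3$ rather than $\epsilon$ (since witness thresholds are nonincreasing in their precision argument), and checking that the convexity bound on $\risk_{n}(\vartheta)$ holds uniformly in $\vartheta$ so that no $\vartheta$-dependent tail term survives the supremum.
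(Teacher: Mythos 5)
Your proposal is correct and follows essentially the same route as the paper's (very terse) proof: the bounds on the first two terms of the triangle-inequality decomposition are deterministic and $\vartheta$-free, so the supremum only bites on the empirical-risk term, which the uniform convergence property handles. Your closing remark about invoking the witness function at level $\epsilon/3$ rather than $\epsilon$ is in fact a point the paper glosses over --- as stated, the theorem's sample-size condition uses $g(\epsilon,\alpha)$, whereas the argument (yours and the paper's alike) really delivers the conclusion for $n \ge \max\{g(\epsilon/3,\alpha),\, 3M/\epsilon - 1\}$.
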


\begin{proof}
Following the proof of Theorem \ref{thm:consistency_pointwise}, observe that the probabilistic bounds in \eqref{eq:term_1} and \eqref{eq:term_2} actually hold uniformly over $\vartheta\in\Theta$. 
Then an application of the triangle inequality given at the beginning of the proof of Theorem \ref{thm:consistency_pointwise} completes the proof, by the assumption of the uniform convergence property. $\hfill\square$
\end{proof}


\section{Numerical illustration}
\begin{figure}[!t]
  \centering
  \includegraphics[width=\textwidth]{./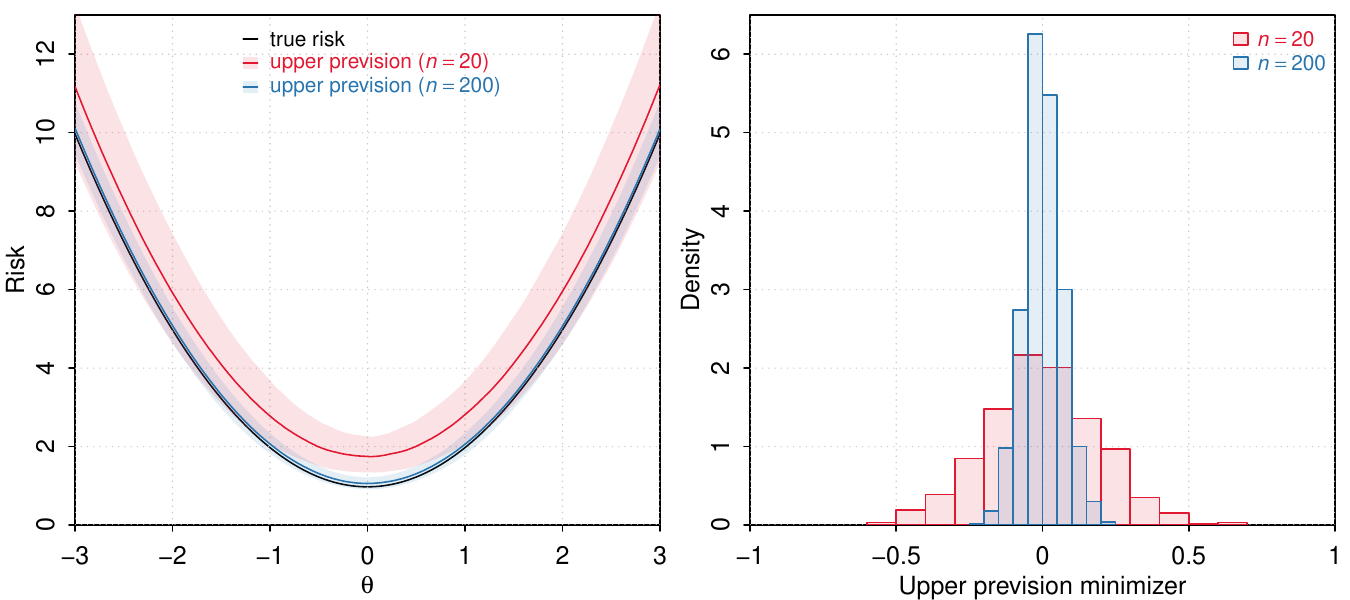}
  \caption{\footnotesize Summary of numerical illustration in the Dempster-Hill setup. Left: True risk (black) and upper prevision (red and blue) as functions of $\theta$. The solid, colored curves are median MFGF upper previsions pooled across 1000 replications at two sample sizes ($n = 20$ and 200), and the shaded areas indicate the middle 95\% upper prevision values across replications. Right: Histograms of MFGF upper prevision minimizers across 1000 replications.}
  \label{fig:sim}
\end{figure}

A simple numerical example is provided to demonstrate the recovery of the true risk by the MFGF upper risk in the Dempster-Hill setup.  Two sample size conditions were considered: $n = 20$ and 200.  We simulated data from a standard normal distribution truncated to the interval $[-3, 3]$ and focused on the squared error loss function $\ell(\vartheta, y) = (y - \vartheta)^2$.  A summary of simulation results can be found in Figure~\ref{fig:sim}.  It is easy to see that the true risk minimizer is zero, the mean of the truncated normal distribution.

 The median, the 5th, and the 95th percentiles of upper previsions across 1000 replications are displayed across a wide range of $\theta$ values in the left panel; as a benchmark, the true risk function is also superimposed. It is observed that the upper prevision overestimates the risk on average, with the discrepancy narrowing as the sample size grows. In the right panel of Figure~\ref{fig:sim}, we contrast the histograms of upper prevision minimizers obtained under the two sample size conditions. We note that the histograms of upper prevision minimizers center around the true risk minimizer (i.e., 0) and become more concentrated in larger samples.

\bibliographystyle{apalike}
\bibliography{references}

\begin{thebibliography}{}

\bibitem[Angelopoulos et~al., 2023]{angelopoulos2023}
Angelopoulos, A.~N., Bates, S., Fisch, A., Lei, L., and Schuster, T. (2023).
\newblock Conformal risk control.
\newblock In {\em The Twelfth International Conference on Learning
  Representations}.

\bibitem[Augustin et~al., 2014]{augustin2014}
Augustin, T., Walter, G., and Coolen, F.~P. (2014).
\newblock Statistical inference.
\newblock {\em Introduction to Imprecise Probabilities}, pages 135--189.

\bibitem[Carmichael and Williams, 2018]{carmichael2018}
Carmichael, I. and Williams, J.~P. (2018).
\newblock An exposition of the false confidence theorem.
\newblock {\em Stat}, 7(1):e201.

\bibitem[Coolen, 1998]{coolen1998}
Coolen, F.~P. (1998).
\newblock Low structure imprecise predictive inference for {B}ayes' problem.
\newblock {\em Statistics \& Probability Letters}, 36(4):349--357.

\bibitem[Den{\oe}ux, 2019]{denoeux2019}
Den{\oe}ux, T. (2019).
\newblock Decision-making with belief functions: A review.
\newblock {\em International Journal of Approximate Reasoning}, 109:87--110.

\bibitem[Dey and Williams, 2023]{dey2023}
Dey, N. and Williams, J.~P. (2023).
\newblock Valid inference for machine learning model parameters.
\newblock {\em arXiv preprint arXiv:2302.10840}.

\bibitem[Fisher, 1935]{fisher1935}
Fisher, R.~A. (1935).
\newblock The fiducial argument in statistical inference.
\newblock {\em Annals of Eugenics}, 6(4):391--398.

\bibitem[Hannig et~al., 2016]{hannig2016}
Hannig, J., Iyer, H., Lai, R.~C., and Lee, T.~C. (2016).
\newblock Generalized fiducial inference: A review and new results.
\newblock {\em Journal of the American Statistical Association},
  111(515):1346--1361.

\bibitem[Hill, 1968]{hill1968}
Hill, B.~M. (1968).
\newblock Posterior distribution of percentiles: {B}ayes' theorem for sampling
  from a population.
\newblock {\em Journal of the American Statistical Association},
  63(322):677--691.

\bibitem[Huntley et~al., 2014]{huntley2014}
Huntley, N., Hable, R., and Troffaes, M.~C. (2014).
\newblock Decision making.
\newblock {\em Introduction to imprecise probabilities}, pages 190--206.

\bibitem[Martin, 2019]{martin2019}
Martin, R. (2019).
\newblock False confidence, non-additive beliefs, and valid statistical
  inference.
\newblock {\em International Journal of Approximate Reasoning}, 113:39--73.

\bibitem[Martin, 2021]{martin2021}
Martin, R. (2021).
\newblock Inferential models and the decision-theoretic implications of the
  validity property.
\newblock {\em arXiv preprint arXiv:2112.13247}.

\bibitem[Shalev-Shwartz and Ben-David, 2014]{shalev2014}
Shalev-Shwartz, S. and Ben-David, S. (2014).
\newblock {\em Understanding machine learning: From theory to algorithms}.
\newblock Cambridge university press.

\bibitem[Taraldsen and Lindqvist, 2024]{taraldsen2024}
Taraldsen, G. and Lindqvist, B.~H. (2024).
\newblock Fiducial inference and decision theory.
\newblock In {\em Handbook of Bayesian, Fiducial, and Frequentist Inference},
  pages 381--396. Chapman and Hall/CRC.

\bibitem[Vovk, 2024]{vovk2024}
Vovk, V. (2024).
\newblock Conformal predictive distributions: An approach to nonparametric
  fiducial prediction.
\newblock In {\em Handbook of Bayesian, Fiducial, and Frequentist Inference},
  pages 364--380. Chapman and Hall/CRC.

\bibitem[Vovk et~al., 2022]{vovk2022}
Vovk, V., Gammerman, A., and Shafer, G. (2022).
\newblock {\em Algorithmic Learning in a Random World}.
\newblock Springer Nature.

\bibitem[Walley, 1991]{walley1991}
Walley, P. (1991).
\newblock {\em Statistical Reasoning with Imprecise Probabilities}.
\newblock Chapman \& Hall.

\bibitem[Williams, 2023]{williams2023}
Williams, J.~P. (2023).
\newblock Model-free generalized fiducial inference.
\newblock {\em arXiv preprint arXiv:2307.12472}.

\end{thebibliography}

\end{document}